\documentclass[a4paper,twoside,11pt,reqno]{amsart}
\usepackage{amssymb}
\usepackage{amsmath}
\usepackage{amsfonts}

\setcounter{MaxMatrixCols}{10}

\newtheorem{theorem}{Theorem}

\newtheorem{lemma}[theorem]{Lemma}

\newtheorem{proposition}[theorem]{Proposition}

\def\func#1{\mathop{\rm #1}}

\title{  Green's formula and variational inequations}
\author{Marc Atteia} 
\email{marcatteia@orange.fr } 
\date{Version of \today}
\begin{document}
\maketitle 
\begin{abstract}
In this paper, I give an extension to the banachic  - non linear- case of the work of B. Hanouzet and J.L. Joly
which is quoted below in the bibliography and which was elaborated in the hilbertian- linear- case.
\end{abstract}
\section{ Introduction}

 Let  $\mathcal{L}$, $\mathcal{B}$ , $\mathcal{B}_{0}$, $\mathcal{B}_{1}$, $\mathcal{T}$\, be five reflexive Banach spaces \ such that :
\begin{itemize}
\item[$\left( \ast \right) $] \ $\mathcal{B}_{0}$ (resp. $\mathcal{B}_{1}$)\ is a dense vectorial subspace of \ $\mathcal{L}$ and  $j_{0}$ (resp. \ $j_{1}$) \ is continuous injection from \  
$\mathcal{B}_{0}$ (resp. $\mathcal{B}_{1}$) \ into \ $\mathcal{L}$ .
\\
\item[$\left( \ast \ast \right) $]\ $\mathcal{B}_{0}$ \ is a
vectorial subspace of \ $\mathcal{B}_{1}$, embedded with the norm of \ $\mathcal{B}_{1}$, which is closed in \ $\mathcal{B}_{1}.$
\\
 We shall denote by \ $j$ \ the (continuous) injection of \ $\mathcal{B}_{0}$ \ into $\mathcal{B}_{1}$ .
\\
\item[$\left( \ast \ast \ast \right)$] \ ${\Gamma }$ \ is
the (closed) image of $\mathcal{B}_{1}$ \ in \ $\mathcal{T}$ \ \ by a linear
mapping \ $\gamma _{0}$ \ such that :
$$\func{Im}\,j=\ker\, \gamma _{0}.$$
\end{itemize}
 Let us denote by \ $\mathcal{L}^{\prime }$, $\mathcal{B}^{\prime }$ , 
$\mathcal{B}_{0}^{\prime }$, $\mathcal{B}_{1}^{\prime }$, $\mathcal{T}$\ $%
^{\prime }$ \ the topological duals of \ $\mathcal{L}$, $\mathcal{B}$ , $%
\mathcal{B}_{0}$, $\mathcal{B}_{1}$, $\mathcal{T}$\ 
respectively.
\\
 Then, \ $^{t}j_{0}$ (resp. $^{t}$\ $j_{1}$) \ \ is a (linear
and continuous) injective mapping from
 $\mathcal{L}^{\prime }$ $\ $into $\ \mathcal{B}_{0}^{\prime }$ \
(resp. $\mathcal{B}_{1}^{\prime })$ \ with a dense image for the strong
topology.
\\
 As \ : \ $j_{0}$ $=$\ $j_{1}j$ \ \ , we have : \ $^{t}j_{0}=$ $%
^{t}j$ $^{t}j_{1}$ . \ We set \ : \ \ $\rho =$ $^{t}j$ .
\\
 As \ $Im\left( j\right) $ is closed , \ $Im\left( ^{t}j\right) $ \ is
also closed.
\\
$^{t}\gamma _{0}$ \ \ is a (linear and continuous) injective
mapping from \ $\mathcal{T}^{\prime }$ $\ $into $\ \mathcal{B}_{1}^{\prime }$
  (embedded with its strong topology).
\\ 
Moreover, $Im\left( ^{t}\gamma _{0}\right) $ \ is closed (as \ 
$Im\left( \gamma _{0}\right) $ is closed) \ and we have :
 $$\func{Im}\, \left( ^{t}\gamma _{0}\right) =\ker \,\left( ^{t}j\right).$$
 So, we have the following scheme :
$$
 \begin{array}{ccccccc}
&& \mathcal{T}^{\prime }&\longrightarrow &\mathcal{T}&&
\\ 
&&\downarrow&&\uparrow &&
\\ 
\mathcal{L}^{\prime} & \longrightarrow &  
\mathcal{B}_{1}^{\prime} & \longrightarrow & 
\mathcal{B}_{1} & \longrightarrow & \mathcal{L}
\\  
&\searrow & \downarrow & &\uparrow & \nearrow& 
\\ 
&&\mathcal{B}_{0}^{\prime }&\longrightarrow & \mathcal{B}_{0}&&
\end{array}  
$$
 Below, we denote by \ $\left\Vert \cdot \right\Vert _{%
\mathcal{L}}$ \ (resp. \ $\left\Vert \cdot \right\Vert _{1}$ , $\left\Vert
\cdot \right\Vert _{0}$ , $\left\Vert \cdot \right\Vert _{\mathcal{T}}$ ) \
the norm of \ $\mathcal{L}$ \ 
 (resp. $\mathcal{B}_{1}$, $\mathcal{B}_{0}$, $\mathcal{T}$\ ) and by
\ \ $\left\langle \cdot ,\cdot \right\rangle _{\mathcal{L}^{\prime },\ 
\mathcal{L}}$ \ (resp. \ $\left\langle \cdot ,\cdot \right\rangle _{1}$ , \ $%
\left\langle \cdot ,\cdot \right\rangle _{0}$ , $\left\langle \cdot ,\cdot
\right\rangle _{\mathcal{T}^{\prime },\ \mathcal{T}}$) \ the duality bracket 
 between \ $\mathcal{L}$ \ and \ \ $\mathcal{L}^{\prime }$ (resp. $%
\mathcal{B}_{1}$ \ and \ \ $\mathcal{B}_{1}^{\prime }$ , \ $\mathcal{B}_{0}$
\ and \ \ $\mathcal{B}_{0}^{\prime }$ \ , $\ \mathcal{T}$ \ and \ \ $%
\mathcal{T}^{\prime }$\ \ ).\ 
\\  So, we have the following scheme :
$$
\begin{array}{ccccc}
 \mathcal{L}^{\prime } & \longrightarrow &  ^{t}j_{1}\mathcal{L}^{\prime} & \longrightarrow & \mathcal{B}_{1}^{\prime } 
\\ 
&\searrow & \uparrow && \downarrow 
\\ 
 && ^{t}j_{1}\mathcal{L}^{\prime } &   \longrightarrow & \mathcal{B}_{0}^{\prime }
 \end{array}
 $$
  The restriction of \ $\rho $ \ to \ $\ ^{t}j_{1}\mathcal{L}^{\prime }$
\ is injective and surjective as \ \ \ $^{t}j_{0}=$ $\rho \circ $ $^{t}j_{1}$
\ and \ $^{t}j_{0}$ ( $^{t}j_{1}$ )
are injective.
\\ 
The restriction of \ $\rho $ \ to\ $\ ^{t}j_{0}\mathcal{L}^{\prime }$
\ is bijective. Its inverse denoted\ by $\ \overline{\omega }$ \ is
bijective.
\\
(We don't know, however,  if \ $\overline{\omega }$ \ is continuous).
\\
\textbf{Example 1 :} 
\\
 $\forall $\ $u,v\in H^{1}\left( 0,1\right) ,$ \ let : \ $a\left(
u,v\right) =\int_{0}^{1}\left( u^{\prime }v^{\prime }+uv\right) $ $%
dt=\left\langle Au,v\right\rangle _{1}$ .
\\
 Then : $\forall $\ $u,v\in H^{1}\left( 0,1\right) $ , \ $a\left(
u,v\right) =\left\langle Lu,v\right\rangle _{0}$ , thus : \ $L=\rho A$ , and :
\\
 $\forall $\ $u,v\in C^{\infty }\left( 0,1\right) $ , \ $%
\int_{0}^{1}\left( u^{\prime }v^{\prime }+uv\right) $ $dt=\int_{0}^{1}\left(
-u^{^{\prime \prime }}+u\right) .v$ $dt+u^{\prime }\left( 1\right) v\left(
1\right) -u^{\prime }\left( 0\right) v\left( 0\right) $ .
\\
 i.e. \ $\left\langle Au,v\right\rangle _{1}=\left\langle \pi
Lu,v\right\rangle _{1}=\left\langle \gamma _{0}u,\gamma _{0}v\right\rangle _{%
\mathcal{T}^{\prime },\ \mathcal{T}}$ .
\\ 
If, \ $v\in \mathcal{D}\left( 0,1\right) $ \ , \ then : \ $%
\left\langle \pi Lu,v\right\rangle _{1}=\int_{0}^{1}\left( -u^{^{\prime
\prime }}+u\right) v$ $dt=\left\langle \left( -u^{^{\prime \prime
}}+u\right) ,v\right\rangle _{\mathcal{D}^{\prime }\left( 0,1\right) ,%
\mathcal{D}\left( 0,1\right) }$ .
\\ 
 \textbf{Example 2 :}
\\
 Let \ $p\in \mathbb{N}^{\ast }$ .
\\ 
 $\forall $\ $u,v\in \mathcal{B}_{1}$, $\mathcal{\ }h\left( u,v\right)
=\int_{0}^{1}\left( \left\vert u^{\prime }\right\vert ^{p-1}sign\left(
u^{\prime }\right) .v^{\prime }\right) $ $dt=\left\langle Gu,v\right\rangle
_{1}$ \ and : \ 
\\
$\qquad \forall $\ $u\in \mathcal{B}_{1}$, $\forall $\ $v\in \mathcal{B}_{0}$
, $\ \mathcal{\ }h\left( u,v\right) =\int_{0}^{1}\left( \left\vert u^{\prime
}\right\vert ^{p-1}sign\left( u^{\prime }\right) .v^{\prime }\right) $ $%
dt=\left\langle \rho \text{ }Gu,v\right\rangle _{0}$ where :
\\ 
$\rho $ $Gu$ \ is the restriction of $\ $ $Gu$ \ to \ $%
\mathcal{B}_{0}^{^{\prime }}$ .
\\ 
 Formally, we can write :%
\begin{equation*}
\left\langle Gu,v\right\rangle _{1}=\left\langle \pi \rho \text{ }%
Gu,v\right\rangle _{1}+\left\langle \gamma _{G\text{ }}u,\gamma
_{0}v\right\rangle _{\mathcal{T}^{\prime },\ \mathcal{T}}
\end{equation*}
\\
 We do not need to precise, here, the definition of \ $\mathcal{%
B}_{0}$ \ and $\mathcal{B}_{1}$ .
\\  However, if \ $p>1$ $,\ \mathcal{B}_{1}=\left\{ u\in \mathcal{D}%
^{\prime }\left( 0,1\right) \text{ ; }u,u^{\prime }\in L^{p}\left(
0,1\right) \text{ and }u\left( 0\right) =0\right\} $
\\ 
 and \ \ $\mathcal{B}_{0}=\left\{ v\in \mathcal{B}_{1}\text{ ; }\gamma
_{1}v\text{\ }=0\right\} $ \ \ with : \ $\forall $\ $v\in C^{1}\left(
0,1\right) $ \ , \ $\gamma _{1}v$\ $=v^{\prime }\left( 1\right) $
\\ 
 we deduce that, if $\ u$ \ is sufficiently regular, $\gamma _{G\text{ 
}}u=\left( \left\vert u^{\prime }\right\vert ^{p-1}sign\left( u^{\prime
}\right) \right) \left( 1\right) $ .
\\
\qquad (cf. " Quelques m\'{e}thodes de r\'{e}solution de probl\`{e}mes aux
limites non lin\'{e}aires",
\qquad de J.L. Lions, p.25). 
\section{ Green's formula.}
 We recall that $\overline{\omega }$ \ is a linear mapping from \ \ $\
^{t}j_{0}\mathcal{L}^{\prime }$ \ into \ $\mathcal{B}_{1}^{\prime }$ \ such
that :
  $$\forall \, l^{\prime }\in \mathcal{L}^{\prime },\quad \overline{\omega }\, ^{t}j_{0}l^{\prime }=\, ^{t}j_{1}l^{\prime }.$$
 Let \ $G$ \ be an operator  (which is linear or not)  continuous from
\ $\mathcal{B}_{1}$ \ into \ $\mathcal{B}_{1}^{\prime }$ .
  We set : 
\begin{equation*}
\mathcal{B}_{G}=\left\{ u\in \mathcal{B}_{1}\text{ ; }\rho \text{ }G\text{ }%
u\in \ ^{t}j_{0}\mathcal{L}^{\prime }\right\}
\end{equation*}
 Then : 
\begin{eqnarray*}
\forall u &\in &\mathcal{B}_{G}\text{ , \ }\exists \text{ }l^{\prime }\in 
\mathcal{L}^{\prime }\text{ \ such that : }^{t}j\left( G\text{ }u-\
^{t}j_{1}l^{\prime }\right) =\theta _{\mathcal{B}_{0}^{\prime }}\text{\ } \\
&&\text{where \ }\theta _{\mathcal{B}_{0}^{\prime }}\text{ \ is the neutral
element of \ }\mathcal{B}_{0}^{\prime }
\end{eqnarray*}
 Thus, we deduce that :%
\begin{equation*}
G\text{ }u-\ ^{t}j_{0}\text{ }l^{\prime }=G\text{ }u-\overline{\omega }\
^{t}j_{0}\text{ }l^{\prime }\in \ker \text{ }^{t}j=\func{Im}^{t}\gamma _{0}
\end{equation*}
 So : \ 
\begin{equation*}
G\text{ }u-\overline{\omega }\ ^{t}j_{0}\text{ }l^{\prime }=G\text{ }u-%
\overline{\omega }\ \rho \text{ }G\text{ }u=\text{ }^{t}\gamma _{0}z^{\prime
}\text{ , with \ }z^{\prime }\in \mathcal{T}^{\prime }
\end{equation*}
  We shall set, in the following :%
\begin{equation*}
\forall u\in \mathcal{B}_{G}\text{ , }\gamma _{G\text{ }}u=\text{ }%
^{t}\gamma _{0}^{-1}\left( I_{1}^{\prime }-\overline{\omega }\ \rho \right) G%
\text{ }u\in \mathcal{T}^{\prime }
\end{equation*}
  where \ $I_{1}^{\prime }$ \ is the identity of $\mathcal{B}%
_{1}^{\prime }$ .
 \\
  \textbf{Remarks :}
\begin{itemize}
\item[(*)] \ $\gamma _{G\text{ }}$ is a mapping from \ 
$\mathcal{B}_{G}$ \ into \ $\mathcal{T}^{\prime }$ \ \ which is not necessarily linear,
  but \ $^{t}\gamma _{0}^{-1}\left( I_{1}^{\prime }-\overline{%
\omega }\ \rho \right) $ \ is a linear mapping which is definite on \ $%
G\left( \mathcal{B}_{G}\right) $ .
\\
\item[(**)] \ $\left( u\in \mathcal{B}_{G}\right) \iff \chi
_{1}^{\_1}J_{1}^{\ast }G$ $u\in \mathcal{B}_{J_{1}}$ .
\\ Then :%
\begin{equation*}
\forall u\in \mathcal{B}_{G}\text{ , }\forall y\in \mathcal{B}_{1}\text{ , \ 
}\left\langle G\text{ }u,y\right\rangle _{1}=\left\langle \overline{\omega }%
\ \rho \text{ }G\text{ }u,y\right\rangle _{1}+\left\langle \gamma _{G\text{ }%
}u,\gamma _{0\text{ }}y\right\rangle _{\mathcal{T}^{\prime },\mathcal{T}}
\end{equation*} 
 Thus, we have the following scheme :
$$
 \begin{array}{ccccccccccc}
&&&& \mathcal{T}^{\prime }&\longrightarrow &\mathcal{T}&&&&
\\ 
&&&&\downarrow&&\uparrow &&&&
\\ 
E^{\prime}&
\longrightarrow&\mathcal{L}^{\prime} & \longrightarrow &  
\mathcal{B}_{1}^{\prime} & \longrightarrow & 
\mathcal{B}_{1} & \longrightarrow & \mathcal{L}
&\longrightarrow E
\\  
&&&\searrow & \downarrow & &\uparrow & \nearrow& &&
\\ 
&&&&\mathcal{B}_{0}^{\prime }&\longrightarrow & \mathcal{B}_{0}&&&&
\end{array}  
$$ 
 So : 
\begin{eqnarray*}
l_{0} &=&l_{1}\circ j\text{ \ , \ }^{t}l_{0}=\rho \circ \text{ }^{t}l_{1}%
\text{ , \ }l_{1}=k\circ j_{1}\text{ , \ }l_{0}=k\circ j_{0} 
\\
\text{with \ }\forall e^{\prime } &\in &E^{\prime }\text{ , \ }\overline{%
\omega }\text{ }^{t}l_{0}e^{\prime }=\text{ }^{t}l_{1}e^{\prime }\text{\ }
\end{eqnarray*}
\end{itemize}
\textbf{Example : }%
\begin{equation*}
\mathcal{B}_{1}=W^{1,p}\left( \Omega \right) \text{ \ , \ }\mathcal{B}%
_{0}=W_{0}^{1,p}\left( \Omega \right) \text{ \ , }\ \ \mathcal{L=}\text{ }%
L^{p}\left( \Omega \right) \text{ \ , \ }E=\mathcal{D}^{\prime }\left(
\Omega \right)
\end{equation*}
 \\
  Now, we suppose that \ $j_{1\text{ }}$(resp. $j_{0},k$) is a
weakly dense injection 
 (and weakly continuous) from $\ \mathcal{B}_{1}$ \ into \ $\mathcal{L}
$ \ (resp. $\mathcal{B}_{0}$ \ into \ $\mathcal{L}$ \ , $\mathcal{L}$ \ into 
$E$ ).
\\ 
 Then : \ $^{t}j_{1}$ (resp. $^{t}j_{0}$ , $^{t}k$ ) is injective and
its image is weakly (strongly) dense
in $\ \ \mathcal{B}_{1}^{\prime }$ (resp. $\mathcal{B}_{0}^{\prime }$
, $\mathcal{L}^{\prime }$) when those vector spaces are Banach spaces
(reflexive or not).
\\  Let us set :%
\begin{equation*}
\mathcal{B}_{G,E}=\left\{ u\in \mathcal{B}_{1}\text{ ; }\rho \text{ }G\text{ 
}u\in \ ^{t}l_{0}\text{ }E^{\prime }\right\} \subset \mathcal{B}_{G}
\end{equation*}
  So :%
\begin{eqnarray*}
\forall u &\in &\mathcal{B}_{G,E}\text{ , \ }\exists \text{ }e^{\prime }\in
E^{\prime }\text{ , \ such that : \ }^{t}j\left( G\text{ }u-\
^{t}l_{1}l^{\prime }\right) =\theta _{\mathcal{B}_{0}^{\prime }}\text{\ } \\
&\implies &G\text{ }u-\ ^{t}l_{1}e^{\prime }=G\text{ }u-\overline{\omega }%
_{E}\ ^{t}l_{0}\text{ }e^{\prime }\in \ker \text{ }^{t}j=\func{Im}^{t}\gamma
_{0} \\
&\implies &G\text{ }u-\overline{\omega }\ ^{t}l_{0}\text{ }e^{\prime }=G%
\text{ }u-\overline{\omega }_{E}\rho \text{ }G\text{ }u=^{t}\gamma _{0}\text{
}z^{\prime }\text{ , }z^{\prime }\in \mathcal{T}^{\prime }
\end{eqnarray*}
 Now, we set :%
\begin{equation*}
\forall u\in \mathcal{B}_{G,E}\text{ , \ }\gamma _{G\text{ }}u=\text{ }%
^{t}\gamma _{0}^{-1}\left( I_{1}^{\prime }-\overline{\omega }_{E}\ \rho
\right) G\text{ }u\in \mathcal{T}^{\prime }
\end{equation*}
 \\
  \textbf{Remark :}
\\ 
 $\forall $ $e^{\prime }\in E^{\prime }$ , \ $\overline{%
\omega }_{E}$ $^{t}j_{0}$ $^{t}k$ $e^{\prime }=$ $^{t}j_{1}$ $^{t}k$ $%
e^{\prime }=$\ $\overline{\omega }$ $^{t}j_{0}$ $^{t}k$ $e^{\prime }$
\\ 
Thus : 
 $\overline{\omega }_{E}$ \ is the restriction of \ $%
\overline{\omega }$ \ to \ \ $^{t}j_{0}$ $^{t}k$ $E^{\prime }$ \ which is
contained in \ $^{t}j_{0}\mathcal{L}^{\prime }$, and : 
\begin{equation*}
\forall u\in \mathcal{B}_{G,E}\text{ , \ }\forall y\in \mathcal{B}_{1}\text{
, \ }\left\langle Gu,y\right\rangle _{1}=\left\langle \overline{\omega }%
_{E}\rho \text{ }G\text{ }u,y\right\rangle _{1}+\left\langle \gamma _{G,E%
\text{ }}u,\gamma _{0\text{ }}y\right\rangle _{\mathcal{T}^{\prime },%
\mathcal{T}}
\end{equation*}
 where \ $\gamma _{G,E\text{ }}$ \ is the restriction of \ \ $%
\gamma _{G\text{ \ }}$ to \ \ $\mathcal{B}_{G,E}\subset \mathcal{B}_{G}$ .
\section{ An order on \ $\mathcal{L}$ \ and its induced order on \ $%
\mathcal{B}_{1}$ (resp. $\mathcal{B}_{0}$ ).}
 Let \ \ $C^{+\text{ \ }}$ be a closed convex cone in \ $%
\mathcal{L}$ \ \ with \ $\theta _{\mathcal{L}}$ \ as\ its vertex.
\\ 
 Then : 
\begin{equation*}
\mathcal{L=}\text{ }C^{+}-C^{+}\text{ \ and \ }\theta _{\mathcal{L}%
}=C^{+}\cap \left( -C^{+}\right)
\end{equation*}
 Let :%
\begin{equation*}
C_{m}^{+}=\left\{ u\in \mathcal{B}_{m}\text{ ; \ }j_{m}u\in C^{+}\right\} 
\text{ \ for \ }m=0,1
\end{equation*}
 We have : 
\begin{equation*}
\mathcal{B}_{m}=C_{m}^{+}-C_{m}^{+}\text{ \ and \ }\theta _{\mathcal{B}%
_{m}}=C_{m}^{+}\cap \left( -C_{m}^{+}\right) \text{ \ for \ }m=0,1
\end{equation*}
 \\
  \textbf{Hypotheses :}
\\ 
 We suppose that :
\begin{itemize}
\item[$\left( \ast \right) $] for \ $m=0,1$ \ , \ \ $j_{m}C_{m}^{+}$
\ is dense in \ $C^{+}$ (for the topology of \ $\mathcal{L}$) .
\\
\item[ $\left( \ast \ast \right) $] $\mathcal{B}_{m}$ \ is nested \
for the order induced by that of \ $\mathcal{L}$ . \ 
\end{itemize}
If \ $u\in \left[ C^{+}\right] $ (resp. $C_{1}^{+},$ $%
C_{0}^{+} $ ), we shall write : $u\geq 0$
  (and $u\leq 0$ \ if \ $u\in \left( -C^{+}\right) =C^{-}$
(resp. $\left( -C_{m}^{+}=C_{m}^{-}\right) $ for \ $m=0,1$ )
\\ 
 We shall denote by \ $D^{+}$ (resp. $D_{1}^{+}$ , $D_{0}^{+}$
) the polar cone of \ $C^{+}$ (resp. $C_{1}^{+}$ , $C_{0}^{+}$ )
  for the duality between \ $\mathcal{L}$ and \ $\mathcal{L}%
^{\prime }$ $\ $(resp.~$\mathcal{B}_{1}$ and \ $\mathcal{B}_{1}^{\prime }$ , 
$\mathcal{B}_{0}$ and \ $\mathcal{B}_{0}^{\prime }$ ).
 \\
  So :%
\begin{equation*}
D^{+}=\left\{ x^{\ast }\in \mathcal{L}^{\prime }\text{ };\text{ }\forall
x\in \ C^{+}\text{ , }\left\langle x^{\ast },x\right\rangle \geq 0\right\}
\end{equation*}
  $D^{+}$ (resp. \ $D_{m}^{+}$ \ for \ $m=0,1$ ) is a weakly
(and strongly) closed cone as $\mathcal{B}$ \ 
 (resp. $\mathcal{B}_{m}$ \ \ for \ $m=0,1$) \ is reflexive.
\\ 
If \ $u^{\ast }\in D^{+}$ (resp. \ $D_{m}^{+}$ \ for \ $%
m=0,1$ ) , we shall write \ $u^{\ast }\geq 0$ \ and
  $u^{\ast }\leq 0$ \ if \ $u\in \left( -D^{+}\right)
=D^{-}$ (resp. $\left( -D_{m}^{+}=D_{m}^{-}\right) $ for \ $m=0,1$) .
 \begin{proposition}
  $D_{m}^{+}$ \ is the adherence of \ \ $^{t}j_{m}D^{+}$
\ in \ $\mathcal{B}_{m}^{\prime }$ for \ $m=0,1$ .
\end{proposition}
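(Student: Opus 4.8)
The plan is to establish the equality $\overline{^{t}j_{m}D^{+}}=D_{m}^{+}$ by proving the two inclusions separately, the first being elementary and the second resting on the bipolar theorem for closed convex cones in a reflexive space. Throughout, for a cone $A$ in one of our spaces I write $A^{\circ}$ for its polar cone in the relevant duality, so that $D^{+}=(C^{+})^{\circ}$ in the duality $\langle \mathcal{L}^{\prime},\mathcal{L}\rangle$ and $D_{m}^{+}=(C_{m}^{+})^{\circ}$ in the duality $\langle \mathcal{B}_{m}^{\prime},\mathcal{B}_{m}\rangle$.

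First I would check the inclusion $^{t}j_{m}D^{+}\subseteq D_{m}^{+}$. Take $x^{\ast}\in D^{+}$ and $u\in C_{m}^{+}$; by definition of $C_{m}^{+}$ one has $j_{m}u\in C^{+}$, so the defining relation of the transpose gives
\[
\langle {}^{t}j_{m}x^{\ast},u\rangle _{m}=\langle x^{\ast},j_{m}u\rangle _{\mathcal{L}^{\prime},\mathcal{L}}\geq 0,
\]
whence $^{t}j_{m}x^{\ast}\in D_{m}^{+}$. Since $D_{m}^{+}$ is strongly closed, the closure $\overline{^{t}j_{m}D^{+}}$ is still contained in $D_{m}^{+}$.

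For the reverse inclusion I would set $K=\overline{^{t}j_{m}D^{+}}$, a closed convex cone in the reflexive space $\mathcal{B}_{m}^{\prime}$, and compute its polar cone $K^{\circ}\subseteq \mathcal{B}_{m}^{\prime\prime}=\mathcal{B}_{m}$. Passing to the closure does not change the polar cone, so $K^{\circ}=({}^{t}j_{m}D^{+})^{\circ}$; applying the transpose relation once more, an element $u\in \mathcal{B}_{m}$ lies in $K^{\circ}$ if and only if $\langle x^{\ast},j_{m}u\rangle _{\mathcal{L}^{\prime},\mathcal{L}}\geq 0$ for every $x^{\ast}\in D^{+}$, that is, if and only if $j_{m}u$ belongs to the polar cone of $D^{+}$ computed in $\mathcal{L}$. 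But $D^{+}=(C^{+})^{\circ}$ and $C^{+}$ is a closed convex cone, so by the bipolar theorem $(D^{+})^{\circ}=(C^{+})^{\circ\circ}=C^{+}$; hence $u\in K^{\circ}$ if and only if $j_{m}u\in C^{+}$, i.e. $K^{\circ}=C_{m}^{+}$.

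It then remains to invoke the bipolar theorem once more, now for the closed convex cone $K$ in the reflexive space $\mathcal{B}_{m}^{\prime}$: this gives $K=K^{\circ\circ}=(K^{\circ})^{\circ}=(C_{m}^{+})^{\circ}=D_{m}^{+}$, which is precisely the desired equality. The main obstacle is the identification $K^{\circ}=C_{m}^{+}$: the whole argument hinges on the fact that the polar-cone operation commutes with the transpose $^{t}j_{m}$ (through the adjunction $\langle {}^{t}j_{m}x^{\ast},u\rangle _{m}=\langle x^{\ast},j_{m}u\rangle _{\mathcal{L}^{\prime},\mathcal{L}}$) together with the bipolar theorem applied to $C^{+}$. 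Reflexivity enters twice and is essential: once to identify $\mathcal{B}_{m}^{\prime\prime}=\mathcal{B}_{m}$, so that $K^{\circ}$ lives in $\mathcal{B}_{m}$ and $C_{m}^{+}$ is an admissible polar set, and once to guarantee $K=K^{\circ\circ}$. Note that hypotheses $(\ast)$ and $(\ast\ast)$ of the preceding section are not needed for this proposition.
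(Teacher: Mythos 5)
Your proof is correct, and it reaches the conclusion by a route organized differently from the paper's, though powered by the same convex-duality engine. The paper argues by contradiction: assuming $u'\in D_m^{+}\setminus\overline{{}^{t}j_{m}D^{+}}$, it invokes Hahn--Banach separation (with reflexivity of $\mathcal{B}_m$ used to realize the separating functional as an element $u\in\mathcal{B}_m$) to produce $u$ with $\langle {}^{t}j_{m}l',u\rangle_{m}\geq 0$ for all $l'\in D^{+}$ and $\langle u',u\rangle_{m}<0$, and then concludes from $\langle l',j_{m}u\rangle\geq 0$ for all $l'\in D^{+}$ that $j_{m}u\in C^{+}$, hence $u\in C_m^{+}$, contradicting $u'\in D_m^{+}$. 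That last step --- passing from nonnegativity against all of $D^{+}$ to membership in $C^{+}$ --- is exactly the bipolar identity $(D^{+})^{\circ}=(C^{+})^{\circ\circ}=C^{+}$, which the paper uses silently and which you state and justify explicitly (it needs $C^{+}$ closed and convex, which holds by hypothesis; note that this first bipolar application does not in fact need reflexivity of $\mathcal{L}$, since strongly closed convex sets are weakly closed). Your version replaces the contradiction by a direct computation, $K^{\circ}=C_m^{+}$ via the adjunction and then $K=K^{\circ\circ}=D_m^{+}$; the second bipolar application, which requires $K$ to be $\sigma(\mathcal{B}_m',\mathcal{B}_m)$-closed, is precisely where the paper's separation argument lives, so the two proofs use reflexivity at the same essential point. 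One small bookkeeping quibble: your claim that reflexivity enters ``to identify $\mathcal{B}_m''=\mathcal{B}_m$ so that $K^{\circ}$ lives in $\mathcal{B}_m$'' is not quite the right accounting, since the polar of $K$ with respect to the predual $\mathcal{B}_m$ makes sense in any dual pair; reflexivity is genuinely needed only once, to upgrade strong closedness of the convex cone $K$ to weak-$*$ closedness so that $K^{\circ\circ}=K$. Your observation that hypotheses $(\ast)$ and $(\ast\ast)$ of Section~3 are not used is consistent with the paper's proof, which likewise relies only on the closedness and convexity of $C^{+}$ and the reflexivity of $\mathcal{B}_m$.
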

\begin{proof}
  First, we remark that : \ $^{t}j_{m}D^{+}\subset $\ $%
D_{m}^{+}$ \ (we have : $^{t}j_{0}D^{+}=\rho $ $^{t}j_{1}D^{+}$ ).
\\ 
Indeed : \ $\forall l^{\prime }\in D^{+}$ , \ $\forall u\in
C_{m}^{+}$ , \ $\left\langle ^{t}j_{m}l^{\prime },u\right\rangle
_{m}=\left\langle l^{\prime },\text{ }j_{m}u\right\rangle _{\mathcal{L}%
^{\prime },\mathcal{L}}\geq 0$ .
\\ 
 Let us suppose that : \ \ $^{t}j_{m}D^{+}\subsetneq $\ $%
D_{m}^{+}$ \ \ and \ let : \ $u^{\prime }\in D_{m}^{+}\backslash \overline{\
^{t}j_{m}D^{+}}$ .
 \\
  As \ \ $\overline{\ ^{t}j_{m}D^{+}}$ $\ $\ is a closed
convex cone (with \ \ $\theta _{\mathcal{B}_{m}^{^{\prime }}}$ \ as its
vertex)
\\ 
 and as \ \ $\mathcal{B}_{m}$ \ is reflexive, there
exists \ $u\in \mathcal{B}_{m}$ \ such that :%
\begin{equation*}
\ \forall l^{\prime }\in D^{+}\text{ \ , \ }\left\langle ^{t}j_{m}l^{\prime
},u\right\rangle _{m}\geq 0\text{ \ and : \ }\left\langle u^{\prime
},u\right\rangle _{m}<0
\end{equation*}
  Thus, we have :%
\begin{equation*}
\ \forall l^{\prime }\in D^{+}\text{ \ , \ }\left\langle l^{\prime },\text{ }%
j_{m}u\right\rangle \geq 0\text{ \ and : \ }\left\langle u^{\prime
},u\right\rangle _{m}<0
\end{equation*}
 So : 
\begin{equation*}
\left( \text{ }j_{m}u\geq 0\text{ \ , \ }u\geq 0\text{ \ and : \ }%
\left\langle u^{\prime },u\right\rangle _{m}<0\right) \text{ \ \ what is
impossible.}
\end{equation*}
\end{proof}
 
 \begin{lemma}
 $\overline{\omega }$ \ is an homomorphism for the oder
structures which are definite on \ $^{t}j_{0}\mathcal{L}^{\prime }$ and $^{t}j_{1}\mathcal{L}^{\prime }$ \ respectively by \ $%
^{t}j_{0}D^{+}$ \ and \ $^{t}j_{1}D^{+}$ .
\\ 
 Moreover : \ $\overline{\omega }\left(
^{t}j_{0}D^{+}\right) =\ ^{t}j_{1}D^{+}$ .
\end{lemma}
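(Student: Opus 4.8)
The plan is to reduce everything to the single defining relation $\overline{\omega}\,{}^{t}j_{0}l' = {}^{t}j_{1}l'$ (valid for all $l'\in\mathcal{L}'$) together with the linearity of $\overline{\omega}$, which holds because $\overline{\omega}$ is the inverse of the restriction of the linear map $\rho={}^{t}j$ to ${}^{t}j_{1}\mathcal{L}'$. Everything in the statement will follow formally from these two facts.

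First I would establish the identity $\overline{\omega}\bigl({}^{t}j_{0}D^{+}\bigr)={}^{t}j_{1}D^{+}$, which is the heart of the statement. For the inclusion $\subseteq$, take $\xi\in{}^{t}j_{0}D^{+}$, write $\xi={}^{t}j_{0}l'$ with $l'\in D^{+}$, and apply the defining relation to obtain $\overline{\omega}\xi={}^{t}j_{1}l'\in{}^{t}j_{1}D^{+}$. For $\supseteq$, given $\zeta={}^{t}j_{1}l'\in{}^{t}j_{1}D^{+}$ with $l'\in D^{+}$, set $\xi={}^{t}j_{0}l'\in{}^{t}j_{0}D^{+}$ and note $\overline{\omega}\xi=\zeta$. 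Neither inclusion uses anything beyond the definition of $\overline{\omega}$.

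For the homomorphism property I would invoke linearity: if $\xi,\eta\in{}^{t}j_{0}\mathcal{L}'$ satisfy $\xi-\eta\in{}^{t}j_{0}D^{+}$, then $\overline{\omega}\xi-\overline{\omega}\eta=\overline{\omega}(\xi-\eta)\in\overline{\omega}\bigl({}^{t}j_{0}D^{+}\bigr)={}^{t}j_{1}D^{+}$, so the order defined by ${}^{t}j_{0}D^{+}$ is carried into the order defined by ${}^{t}j_{1}D^{+}$. Since $\overline{\omega}$ is bijective with inverse $\rho|_{{}^{t}j_{1}\mathcal{L}'}$, and $\rho\bigl({}^{t}j_{1}D^{+}\bigr)={}^{t}j_{0}D^{+}$ (because $\rho\,{}^{t}j_{1}={}^{t}j_{0}$), the same argument run through $\overline{\omega}^{-1}$ yields the converse implication, so $\overline{\omega}$ is in fact an order isomorphism for these two cone orders.

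Once the defining relation is in hand the argument is essentially formal, so I do not expect a genuine obstacle. The only points needing a moment's care are (i) that $\overline{\omega}$ is genuinely linear, so that it respects differences of elements --- this is exactly its being the inverse of a linear bijection; and (ii) that the cones in play are the images ${}^{t}j_{0}D^{+}$ and ${}^{t}j_{1}D^{+}$ themselves, and \emph{not} their closures $D_{0}^{+}$, $D_{1}^{+}$ produced by the preceding Proposition, so that no continuity or density property of $\overline{\omega}$ (which, as noted in the introduction, may fail to be continuous) is required.
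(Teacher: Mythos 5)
Your proof is correct for the statement as literally worded, but it follows a genuinely different route from the paper's, and the difference is instructive. You read both order structures as given by the image cones ${}^{t}j_{0}D^{+}$ and ${}^{t}j_{1}D^{+}$ themselves, after which everything collapses to the defining relation $\overline{\omega}\,{}^{t}j_{0}l'={}^{t}j_{1}l'$ plus linearity of $\overline{\omega}$; on that reading the lemma is essentially tautological, you even upgrade it to an order isomorphism via $\rho\,{}^{t}j_{1}={}^{t}j_{0}$, and you are right that no continuity or density enters. The paper's proof does something different and stronger: it interprets ${}^{t}j_{m}l'\geq 0$ as membership in the polar cone $D_{m}^{+}$ (positivity tested by duality against $C_{m}^{+}$) and proves the equivalence $\left({}^{t}j_{m}l'\in D_{m}^{+}\right)\iff\left(l'\in D^{+}\right)$ for $m=0,1$, whose nontrivial direction uses hypothesis $(\ast)$ of Section 3, the density of $j_{m}C_{m}^{+}$ in $C^{+}$: from $\langle l',j_{m}v\rangle\geq 0$ for all $v\in C_{m}^{+}$ one passes by density to $\langle l',u\rangle\geq 0$ for all $u\in C^{+}$. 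This identifies the image cone with the trace of the ambient dual cone, ${}^{t}j_{m}\mathcal{L}'\cap D_{m}^{+}={}^{t}j_{m}D^{+}$, and only then concludes $\left({}^{t}j_{0}l'\geq 0\right)\implies\left(\overline{\omega}\,{}^{t}j_{0}l'={}^{t}j_{1}l'\geq 0\right)$. So your formal argument buys the set identity and bijectivity for free, while the paper's density argument buys the compatibility of the cone order on ${}^{t}j_{m}\mathcal{L}'$ with the positivity notion actually used in the sequel (the later lemmas test $v'\geq 0$ against elements $0\leq v\leq u$, i.e.\ in the $D_{m}^{+}$ sense); your remark (ii), that the closures $D_{0}^{+},D_{1}^{+}$ play no role, is accurate for the literal statement but bypasses what is, in the paper's development, the real content of this lemma.
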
  
\begin{proof}
 $\left( l^{\prime }\in \mathcal{L}^{\prime }\text{ , }%
^{t}j_{m}l^{\prime }\geq 0\text{\ }\right) \iff \left( l^{\prime }\in 
\mathcal{L}^{\prime }\text{ , \ }l^{\prime }\geq 0\right) $ \ \ for \ $m=0,1$
\ because
\\ 
 $\left( \text{ }^{t}j_{m}l^{\prime }\geq 0\right) \iff \left(
\forall v\in C_{m}^{+}\text{ , \ }\left\langle ^{t}j_{m}l^{\prime
},v\right\rangle _{m}=\left\langle l^{\prime },\text{ }j_{m}v\right\rangle
_{m}\geq 0\right) $
\\ 
 $\iff \left( \forall u\in C^{+}\text{ , \ }\left\langle
l^{\prime },u\right\rangle \geq 0\right) $ \ (because \ $^{t}j_{m}C^{+}$ \
is dense in \ $C^{+}$).
\\  Thus : $\left( ^{t}j_{0}l^{\prime }\geq 0\right) \implies
\left( \ \overline{\omega }\ ^{t}j_{0}l^{\prime }=\text{ }^{t}j_{1}l^{\prime
}\geq 0\right) $ .
\end{proof}
\begin{lemma} 
$\forall u\in C_{1}^{+}$ , \ $\forall v^{\prime }\in
\rho $ $D_{1}^{+}$\ , \ $\forall w^{\prime }$\ $\in \left( \rho \right)
^{-1}v^{\prime }$ ,%
\begin{equation*}
Sup\left\{ \left\langle v^{\prime },\text{ }v\right\rangle _{0}\text{ ; \ }%
0\leq v\leq u\text{ , \ }v\in \mathcal{B}_{0}\right\} \leq \left\langle
w^{\prime },\text{ }u\right\rangle _{1}\text{ .}
\end{equation*}
\end{lemma}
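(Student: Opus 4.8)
The plan is to establish the inequality pointwise in $v$ and then pass to the supremum. The decisive algebraic fact is the transpose (adjoint) identity relating the two duality brackets: since $\rho = {}^{t}j$ and $v^{\prime} = \rho\, w^{\prime}$, for every $v \in \mathcal{B}_{0}$ one has
\begin{equation*}
\langle v^{\prime}, v\rangle_{0} = \langle {}^{t}j\, w^{\prime}, v\rangle_{0} = \langle w^{\prime}, j v\rangle_{1}.
\end{equation*}
In particular, the left-hand quantity $\langle v^{\prime}, v\rangle_{0}$ depends only on $v^{\prime}$ and not on the chosen preimage $w^{\prime}$, which is coherent with the fact that any two preimages differ by an element of $\ker \rho = \func{Im}\,{}^{t}\gamma_{0}$, and such an element annihilates $j v \in \func{Im}\, j = \ker \gamma_{0}$.

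First I would unwind the order constraints appearing in the supremum. The condition $0 \leq v$ means $v \in C_{0}^{+}$, i.e. $j_{0} v \in C^{+}$, while $v \leq u$ is read in $\mathcal{B}_{1}$ (using $\mathcal{B}_{0} \subset \mathcal{B}_{1}$) and means $u - j v \in C_{1}^{+}$, i.e. $j_{1} u - j_{0} v \in C^{+}$. Thus, for each admissible $v$, the element $q := u - j v$ lies in $C_{1}^{+}$. Next, using that $v^{\prime} \in \rho\, D_{1}^{+}$, I would select a preimage $w^{\prime}$ lying in $D_{1}^{+}$; combining the identity above with $w^{\prime} \geq 0$ and $q \in C_{1}^{+}$ gives
\begin{equation*}
\langle w^{\prime}, u\rangle_{1} - \langle v^{\prime}, v\rangle_{0} = \langle w^{\prime}, u - j v\rangle_{1} = \langle w^{\prime}, q\rangle_{1} \geq 0,
\end{equation*}
so that $\langle v^{\prime}, v\rangle_{0} \leq \langle w^{\prime}, u\rangle_{1}$ for every admissible $v$. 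Taking the supremum over all $v \in \mathcal{B}_{0}$ with $0 \leq v \leq u$ then yields the claim.

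The step I expect to be the genuine obstacle is the positivity of $w^{\prime}$, on which the whole estimate rests. For an unrestricted preimage in the fibre $(\rho)^{-1} v^{\prime}$ the bound cannot hold as stated, since two preimages differ by ${}^{t}\gamma_{0} z^{\prime}$ and $\langle {}^{t}\gamma_{0} z^{\prime}, u\rangle_{1} = \langle z^{\prime}, \gamma_{0} u\rangle_{\mathcal{T}^{\prime}, \mathcal{T}}$ need not be nonnegative. Hence the statement is to be understood for $w^{\prime} \in D_{1}^{+}$ with $\rho\, w^{\prime} = v^{\prime}$, a choice whose existence is guaranteed precisely by the hypothesis $v^{\prime} \in \rho\, D_{1}^{+}$; once such a positive $w^{\prime}$ is fixed, the estimate follows immediately from the cone positivity, exactly as above. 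The remaining verifications, namely the adjoint identity and the reading of $v \leq u$ inside $\mathcal{B}_{1}$, are routine and use only the definitions of $\rho$, $C_{0}^{+}$ and $C_{1}^{+}$.
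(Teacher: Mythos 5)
Your proof is essentially identical to the paper's, which consists of exactly the same one-line chain $\langle v^{\prime},v\rangle _{0}=\langle \rho \,w^{\prime },v\rangle _{0}=\langle w^{\prime },jv\rangle _{1}\leq \langle w^{\prime },u\rangle _{1}$ for each $v\in \mathcal{B}_{0}$ with $0\leq v\leq u$, followed by passing to the supremum. Your closing caveat is also accurate and worth keeping: the final inequality tacitly uses $\langle w^{\prime },u-jv\rangle _{1}\geq 0$, i.e. $w^{\prime }\in D_{1}^{+}$, and since replacing $w^{\prime }$ by $w^{\prime }+{}^{t}\gamma _{0}z^{\prime }$ changes $\langle w^{\prime },u\rangle _{1}$ by $\langle z^{\prime },\gamma _{0}u\rangle _{\mathcal{T}^{\prime },\mathcal{T}}$, which can be made negative whenever $\gamma _{0}u\neq 0$, the quantifier ``$\forall \,w^{\prime }\in \left( \rho \right) ^{-1}v^{\prime }$'' in the statement must indeed be read as ranging over positive preimages (which exist precisely because $v^{\prime }\in \rho \,D_{1}^{+}$) --- a restriction the paper's proof uses implicitly without stating.
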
 
\begin{proof}%
\begin{equation*}
\text{\ }\forall v\in \mathcal{B}_{0}\text{ , }0\leq v\leq u\text{ , }%
\left\langle v^{\prime },\text{ }v\right\rangle _{0}=\left\langle \rho
w^{\prime },\text{ }v\right\rangle _{0}=\left\langle w^{\prime },\text{ }%
jv\right\rangle _{1}\text{\ }\leq \left\langle w^{\prime },\text{ }%
u\right\rangle _{1}
\end{equation*}
 \end{proof}
 \begin{lemma}
   $\forall u\in C_{1}^{+}$ , \ $\forall v^{\prime }\in $ $%
^{t}j_{0}D^{+}=\rho $ $^{t}j_{1}D_{1}^{+}$\ ,
  $\left\langle \ \overline{\omega }v^{\prime
},u\right\rangle _{1}=Sup\left\{ \left\langle v^{\prime },\text{ }%
v\right\rangle _{0}\text{ ; \ }0\leq v\leq u\text{ }\right\} $
\end{lemma}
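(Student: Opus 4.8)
The plan is to prove the two inequalities separately. The inequality $\sup\{\langle v',v\rangle _{0};\ 0\leq v\leq u\}\leq \langle \overline{\omega }v',u\rangle _{1}$ is already contained in the preceding lemma. Since $v'\in \ ^{t}j_{0}D^{+}$, I would write $v'=\ ^{t}j_{0}l'$ with $l'\in D^{+}$, so that $\overline{\omega }v'=\ ^{t}j_{1}l'$ and $\rho (\overline{\omega }v')=\rho \ ^{t}j_{1}l'=\ ^{t}j_{0}l'=v'$. Hence $w'=\overline{\omega }v'$ lies in $\rho ^{-1}v'$ and, by the Proposition, $w'=\ ^{t}j_{1}l'\in \ ^{t}j_{1}D^{+}\subset D_{1}^{+}$; feeding this admissible $w'$ into the previous lemma gives exactly the desired upper bound.

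The content therefore lies in the reverse inequality $\langle \overline{\omega }v',u\rangle _{1}\leq \sup\{\langle v',v\rangle _{0};\ 0\leq v\leq u\}$. Translating through $v'=\ ^{t}j_{0}l'$, I would rewrite both sides as brackets in $\mathcal{L}$: namely $\langle \overline{\omega }v',u\rangle _{1}=\langle \ ^{t}j_{1}l',u\rangle _{1}=\langle l',j_{1}u\rangle _{\mathcal{L}',\mathcal{L}}$ and $\langle v',v\rangle _{0}=\langle l',j_{0}v\rangle _{\mathcal{L}',\mathcal{L}}$. Because $l'\in D^{+}$ while $0\leq v\leq u$ forces $j_{1}u-j_{0}v=j_{1}(u-jv)\in C^{+}$, the difference $\langle l',j_{1}u-j_{0}v\rangle $ is always nonnegative. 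Consequently the reverse inequality is equivalent to the approximation statement that $j_{1}u$ belongs to the $\mathcal{L}$-closure of the convex set $K=\{j_{0}v;\ v\in \mathcal{B}_{0},\ 0\leq v\leq u\}$ (equivalently, by Hahn--Banach, that no functional in $\mathcal{L}'$ strictly separates $j_{1}u$ from $\overline{K}$): indeed, if $j_{0}v_{n}\to j_{1}u$ in $\mathcal{L}$ then continuity of $l'$ gives $\langle v',v_{n}\rangle _{0}\to \langle \overline{\omega }v',u\rangle _{1}$.

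To produce such approximants I would combine the two standing hypotheses. Applying the density hypothesis $(\ast )$ to $j_{1}u\in C^{+}$ yields $w_{n}\in C_{0}^{+}$ with $j_{0}w_{n}\to j_{1}u$ in $\mathcal{L}$; these satisfy $w_{n}\geq 0$ but in general not $w_{n}\leq u$. I would then invoke the nestedness hypothesis $(\ast \ast )$: in the lattice $\mathcal{B}_{1}$ set $v_{n}=w_{n}\wedge u$, so that automatically $0\leq v_{n}\leq u$. A step to verify is that $v_{n}\in \mathcal{B}_{0}$: since $0\leq v_{n}\leq w_{n}$ with $w_{n}\in \mathcal{B}_{0}=\ker \gamma _{0}$, this should follow from $\mathcal{B}_{0}$ being a solid (order-ideal) subspace of $\mathcal{B}_{1}$, a property one expects from its nestedness and its closedness in $\mathcal{B}_{1}$ (and which holds, for instance, in the model case $\mathcal{B}_{0}=W_{0}^{1,p}$, $\mathcal{B}_{1}=W^{1,p}$). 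Each such $v_{n}$ is then admissible for $K$.

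The main obstacle is the convergence $j_{0}v_{n}\to j_{1}u$ in $\mathcal{L}$. Writing $u-v_{n}=(u-w_{n})^{+}$ in $\mathcal{B}_{1}$, I must show $j_{1}\big((u-w_{n})^{+}\big)\to 0$ in $\mathcal{L}$, knowing only that $j_{0}w_{n}\to j_{1}u$ in $\mathcal{L}$ and not in $\mathcal{B}_{1}$. The relation $(u-w_{n})^{+}\geq u-w_{n}$ gives $j_{1}\big((u-w_{n})^{+}\big)\geq j_{1}u-j_{0}w_{n}\to 0$, and $0\leq (u-w_{n})^{+}\leq u$ furnishes order boundedness; the delicate point is to upgrade these one-sided order relations to genuine norm convergence in $\mathcal{L}$. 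This is precisely where the interplay between the lattice operations of $\mathcal{B}_{1}$ and the order and topology of $\mathcal{L}$ must be exploited. Once $j_{0}v_{n}\to j_{1}u$ is secured, the computation of the second paragraph shows that the supremum is at least $\langle \overline{\omega }v',u\rangle _{1}$, which together with the first paragraph yields the claimed equality.
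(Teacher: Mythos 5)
Your proposal follows the same route as the paper's proof. The first half is the paper's computation repackaged: where you feed $w'=\overline{\omega}v'$ into the preceding lemma (correctly checking $\rho\,\overline{\omega}v'=v'$ and $\overline{\omega}v'={}^{t}j_{1}l'\in{}^{t}j_{1}D^{+}\subset D_{1}^{+}$), the paper verifies directly that $\langle\overline{\omega}v',u-jv\rangle_{1}=\langle l',j_{1}(u-jv)\rangle_{\mathcal{L}',\mathcal{L}}\geq 0$ for $l'\in D^{+}$ and $0\leq v\leq u$; the two are the same estimate. Your reduction of the reverse inequality to the existence of a sequence $(v_{p})\subset C_{0}^{+}$ with $u-jv_{p}\in C_{1}^{+}$ and $\Vert j_{0}v_{p}-j_{1}u\Vert_{\mathcal{L}}\to 0$, followed by passing to the limit in $\langle v',v_{p}\rangle_{0}=\langle l',j_{0}v_{p}\rangle_{\mathcal{L}',\mathcal{L}}\to\langle l',j_{1}u\rangle_{\mathcal{L}',\mathcal{L}}=\langle\overline{\omega}v',u\rangle_{1}$, is verbatim the paper's second step. (One small overclaim: for a fixed $v'$ the reverse inequality is implied by, but not equivalent to, $j_{1}u\in\overline{K}$; you only use the implication, so nothing breaks.)

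The ``delicate point'' you flag and leave open --- upgrading $0\leq(u-w_{n})^{+}\leq u$ and $j_{0}w_{n}\to j_{1}u$ to norm convergence $j_{1}\bigl((u-w_{n})^{+}\bigr)\to\theta_{\mathcal{L}}$ --- is exactly the point the paper does not prove either: it simply asserts ``we know that there exists a sequence $(v_{m})$ contained in $C_{0}^{+}$'' with the two required properties, presumably as a known consequence of hypotheses $(\ast)$ and $(\ast\ast)$ or as imported from Hanouzet--Joly. So relative to the paper your proposal is not missing anything; your truncation $v_{n}=w_{n}\wedge u$ is in fact a plausible reconstruction of the suppressed argument, and you correctly identify the two extra ingredients it needs beyond the stated hypotheses: that $\mathcal{B}_{0}$ be an order ideal in $\mathcal{B}_{1}$ (so that $0\leq v_{n}\leq w_{n}$ with $w_{n}\in\mathcal{B}_{0}$ forces $v_{n}\in\mathcal{B}_{0}$), and that the lattice operations be compatible with, and continuous for, the topology of $\mathcal{L}$ --- e.g.\ if $\mathcal{L}$ is a Banach lattice and $j_{1}$ a lattice homomorphism, then $j_{1}\bigl((u-w_{n})^{+}\bigr)=(j_{1}u-j_{0}w_{n})^{+}\to\theta_{\mathcal{L}}$ and the construction closes, as it does in the model case $W_{0}^{1,p}(\Omega)\subset W^{1,p}(\Omega)\subset L^{p}(\Omega)$. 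In short: your proof is as complete as the paper's, and more explicit about where the abstract hypotheses $(\ast)$, $(\ast\ast)$ alone do not literally suffice.
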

\begin{proof}  
Let \ $v\in C_{0}^{+}$ \ such that : \ \ $u-jv\in
C_{1}^{+}$ .Then :%
\begin{equation*}
\left\langle \ \overline{\omega }v^{\prime },u-jv\right\rangle
_{1}=\left\langle \text{ }^{t}j_{1}l^{\prime },u-jv\right\rangle
_{1}=\left\langle l^{\prime },\text{ }j_{1}\left( u-jv\right) \right\rangle
_{\mathcal{L}^{\prime },\mathcal{L}}\geq 0\text{ \ (where  }l^{\prime }\in
D^{+}\text{ )}
\end{equation*}
  Thus : 
  $$\left\langle \ \overline{\omega }%
v^{\prime },u\right\rangle _{1}\geq \left\langle v^{\prime },v\right\rangle
_{0}.$$
 On the other hand, we know that there exists a sequence  $%
\left( v_{m}\text{ ; }m\in \mathbb{N}\right) $ 
 contained in \ $C_{0}^{+}$ \ such that :%
\begin{equation*}
\forall p\in \mathbb{N}\text{ , }u-jv_{p}\in C_{1}^{+}\text{ \ \ and \ \ }%
\lim_{p\rightarrow \infty }\text{ }\left\Vert j_{0}v_{p}-j_{1}u\right\Vert _{%
\mathcal{L}}=0
\end{equation*}
 We deduce that :%
\begin{equation*}
\forall p\in \mathbb{N}\text{ , }\left\langle \ \overline{\omega }v^{\prime
},u\right\rangle _{1}\geq \left\langle v^{\prime },v_{p}\right\rangle _{0}
\end{equation*}
  and :%
\begin{eqnarray*}
\lim_{p\rightarrow \infty }\text{ }\left\langle v^{\prime
},v_{p}\right\rangle _{0} &=&\lim_{p\rightarrow \infty }\text{ }\left\langle 
\text{ }^{t}j_{0}l^{\prime },v_{p}\right\rangle _{0}=\lim_{p\rightarrow
\infty }\text{ }\left\langle \text{ }l^{\prime },\text{ }j_{0}v_{p}\right%
\rangle _{_{\mathcal{L}^{\prime },\mathcal{L}}} \\
&=&\text{ }\left\langle \text{ }l^{\prime },\text{ }j_{1}u\right\rangle _{_{%
\mathcal{L}^{\prime },\mathcal{L}}}=\left\langle \text{ }^{t}j_{1}l^{\prime
},u\right\rangle _{1}=\left\langle \ \overline{\omega }v^{\prime
},u\right\rangle _{1}
\end{eqnarray*}
\end{proof}
\begin{proposition}   $\forall u\in C_{1}^{+}$ , \ $\forall v^{\prime }\in $ $%
^{t}j_{0}D^{+}=\rho $ $^{t}j_{1}D_{1}^{+}$\ ,
 $$\left\langle \ \overline{\omega }v^{\prime
},u\right\rangle _{1}=Sup\left\{ \left\langle v^{\prime },v\right\rangle _{0}%
\text{ ; }0\leq v\leq u\text{\ }\right\} =Inf\left\{ \left\langle w^{\prime
},u\right\rangle _{1}\text{ ; \ }u^{\prime }\in \left( \rho \right)
^{-1}v^{\prime }\right\} $$
\end{proposition}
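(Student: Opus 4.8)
The first equality is nothing but the statement of the lemma immediately preceding this proposition, so my plan is to invoke it directly: for $u\in C_{1}^{+}$ and $v^{\prime}\in {}^{t}j_{0}D^{+}$ one has $\langle \overline{\omega}v^{\prime},u\rangle_{1}=\sup\{\langle v^{\prime},v\rangle_{0}\,;\,0\le v\le u\}$. Consequently the only genuinely new content is the second equality, and I would establish it by squeezing the infimum between the supremum (as a lower bound) and the number $\langle \overline{\omega}v^{\prime},u\rangle_{1}$ (as an upper bound), all three being thereby forced to agree.

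For the bound $\sup\le\inf$, I would simply read off Lemma 3: for every admissible $w^{\prime}\in(\rho)^{-1}v^{\prime}$ it gives $\sup\{\langle v^{\prime},v\rangle_{0}\,;\,0\le v\le u\}\le\langle w^{\prime},u\rangle_{1}$, and since the left-hand side does not depend on $w^{\prime}$, taking the infimum over all such $w^{\prime}$ on the right preserves the inequality. For the reverse bound, the plan is to exhibit $\overline{\omega}v^{\prime}$ itself as a competitor in the infimum. By construction $\overline{\omega}$ is the inverse of the restriction of $\rho$ to ${}^{t}j_{0}\mathcal{L}^{\prime}$, so $\rho\,\overline{\omega}v^{\prime}=v^{\prime}$ and hence $\overline{\omega}v^{\prime}\in(\rho)^{-1}v^{\prime}$; moreover, since $v^{\prime}\in {}^{t}j_{0}D^{+}$, Lemma 2 yields $\overline{\omega}v^{\prime}\in {}^{t}j_{1}D^{+}\subset D_{1}^{+}$, that is $\overline{\omega}v^{\prime}\ge 0$. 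Thus $\overline{\omega}v^{\prime}$ lies in the set over which the infimum runs, whence $\inf\le\langle\overline{\omega}v^{\prime},u\rangle_{1}$. Chaining the three, $\sup\le\inf\le\langle\overline{\omega}v^{\prime},u\rangle_{1}=\sup$, so the quantities coincide and the infimum is attained at $w^{\prime}=\overline{\omega}v^{\prime}$.

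The step requiring the most care is the admissibility of the competitors in the infimum. The inequality used in Lemma 3, namely $\langle w^{\prime},u-jv\rangle_{1}\ge 0$ with $u-jv\in C_{1}^{+}$, genuinely needs $w^{\prime}\in D_{1}^{+}$; and a sign restriction cannot be dropped, for writing $w^{\prime}=\overline{\omega}v^{\prime}+{}^{t}\gamma_{0}z^{\prime}$ one finds $\langle w^{\prime},u\rangle_{1}=\langle\overline{\omega}v^{\prime},u\rangle_{1}+\langle z^{\prime},\gamma_{0}u\rangle_{\mathcal{T}^{\prime},\mathcal{T}}$, so that over the full affine fibre $\overline{\omega}v^{\prime}+\operatorname{Im}{}^{t}\gamma_{0}$ the infimum would be $-\infty$ as soon as $\gamma_{0}u\ne 0$. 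Hence I would read the infimum as taken over the positive fibre $(\rho)^{-1}v^{\prime}\cap D_{1}^{+}$, which is nonempty exactly because $v^{\prime}\in\rho D_{1}^{+}={}^{t}j_{0}D^{+}$, and on which, by the argument above, $\overline{\omega}v^{\prime}$ is the minimizer.
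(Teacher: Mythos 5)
Your proof is correct and is essentially the paper's own argument: the paper's proof of this proposition is the single line ``deduced from lemmas 2 and 3 and from the fact that $\overline{\omega }v^{\prime }\in \left( \rho \right) ^{-1}v^{\prime }$'', which unpacks exactly into your chain --- the first equality is the immediately preceding lemma, $\func{Sup}\leq \func{Inf}$ comes from the lemma bounding $\left\langle v^{\prime },v\right\rangle _{0}$ by $\left\langle w^{\prime },u\right\rangle _{1}$, and $\overline{\omega }v^{\prime }$ is the competitor realizing the infimum, with the order-homomorphism lemma ($\overline{\omega }\left( ^{t}j_{0}D^{+}\right) =\ ^{t}j_{1}D^{+}$) supplying its admissibility. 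Your closing paragraph is in fact a genuine improvement on the text: as literally stated the infimum over the whole fibre $\left( \rho \right) ^{-1}v^{\prime }=\overline{\omega }v^{\prime }+\func{Im}\,^{t}\gamma _{0}$ would be $-\infty $ whenever $\gamma _{0}u\neq 0$, and the paper's proof of the $\func{Sup}\leq \left\langle w^{\prime },u\right\rangle _{1}$ lemma does silently use $w^{\prime }\in D_{1}^{+}$ in its last step $\left\langle w^{\prime },jv\right\rangle _{1}\leq \left\langle w^{\prime },u\right\rangle _{1}$ (one needs $\left\langle w^{\prime },u-jv\right\rangle _{1}\geq 0$ with $u-jv\in C_{1}^{+}$); reading the infimum over $\left( \rho \right) ^{-1}v^{\prime }\cap D_{1}^{+}$, as you do, is the correct repair, and you have also implicitly fixed the statement's typo $u^{\prime }$ for $w^{\prime }$ in the $\func{Inf}$.
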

\begin{proof}  deduced from lemmas 2 and 3 and from the fact that : \ $%
\ \overline{\omega }v^{\prime }\in \left( \rho \right) ^{-1}v^{\prime }$ .
\end{proof} 
 \textbf{References. }
 \vspace{0.3cm}
 
 [1] B. Hanouzet, J.L. Joly.
 \textit{Applications bilinéaires sur certains sous espaces de Sobolev.} CRAS Paris, t. 294 (1982).
\vspace{0.2cm} 

[2]  J.L. Lions.  Quelques m\'{e}thodes de r\'{e}solution de probl\`{e}mes aux
limites non lin\'{e}aires. Dunod, Gauthiers Villard, 1969.
 
 \end{document}